\documentclass[letterpaper, 10 pt, conference]{ieeeconf}
\IEEEoverridecommandlockouts

\let\labelindent\relax
\usepackage{comment}
\usepackage{booktabs}
\usepackage{cite}
\usepackage{mathtools}
\usepackage{amsmath,amssymb,amsfonts}
\usepackage{algorithmic}
\usepackage{graphicx}
\usepackage{textcomp}
\usepackage{makecell}
\usepackage{xcolor}
\usepackage{amsthm}
\usepackage{microtype}
\usepackage{enumitem}
\newtheorem{definition}{Definition}

\newcommand{\R}{\mathbb{R}}

\newcommand{\ip}[2]{\left\langle #1,#2\right\rangle}
\usepackage{amsmath}
\DeclareMathOperator*{\esssup}{ess\,sup}

\newcommand{\mcl}[1]{\mathcal{#1}}
\newcommand{\mbb}[1]{\mathbb{#1}}
\newcommand{\mbf}[1]{\mathbf{#1}}

\newcommand{\norm}[1]{\left\lVert #1 \right\rVert}

\newcommand{\mat}[1]{\begin{matrix} #1 \end{matrix}}

\newcommand{\bmat}[1]{\begin{bmatrix} #1 \end{bmatrix}}
\newtheorem{theorem}{Theorem} 
\newtheorem{corollary}{Corollary}
\theoremstyle{definition}
\newtheorem{example}{Example}
\def\BibTeX{{\rm B\kern-.05em{\sc i\kern-.025em b}\kern-.08em
    T\kern-.1667em\lower.7ex\hbox{E}\kern-.125emX}}
\allowdisplaybreaks
\begin{document}

\title{\LARGE \bf Impulse-to-Peak-Output Norm Optimal State-Feedback Control of Linear PDEs}%
\author{Tristan Thomas$^{1}$, Sachin Shivakumar$^{2}$, and Javad Mohammadpour Velni$^{1}$
\thanks{$^{1}$Tristan Thomas \{{\tt\small tthoma7@clemson.edu}\} and Javad Mohammadpour Velni \{{\tt\small javadm@clemson.edu}\} are with the Department of Mechanical Engineering at Clemson University.}%
\thanks{$^{2}$Sachin Shivakumar \{{\tt\small sshivakumar@lanl.gov}\} is with the Center for Nonlinear Studies, Los Alamos National Laboratory. }%
\thanks{This work is partly supported by the LDRD program (20250614CR-NLS) of Los Alamos National Laboratory. LA-UR-26-22329. Tristan Thomas is supported by the U.S. Department of Education through GAANN program P200A220075.}
}


\maketitle

\begin{abstract}
Impulse-to-peak response (I2P) analysis for state-space ordinary differential equation (ODE) systems is a well-studied classical problem. However, the computational techniques employed for I2P optimal control of ODEs have not been extended to partial differential equation (PDE) systems due to the lack of a computation-friendly universal transfer function and state-space representation. Recently, however, partial integral equation (PIE) representation was proposed as the desired state-space representation of a PDE, and Lyapunov stability theory was used to solve various control problems, such as stability and $H_\infty$-optimal control. In this work, we utilize this PIE framework, and associated Lyapunov techniques, to formulate the I2P response analysis problem as a solvable convex optimization and obtain provable bounds for the I2P-norm of linear PDEs. Moreover, by establishing strong duality between primal and dual formulations of the optimization problem, we develop a constructive method for I2P optimal state-feedback control of PDEs and demonstrate the effectiveness of the method on various examples.

\end{abstract}


\section{Introduction}

Optimal control of partial differential equations (PDEs) has been a well-studied topic \cite{Moura,vanKeulen,troltzsch2010optimal} owing to their ubiquitous use in a range of practical applications. However, metrics used therein cannot certify safety-critical transient behaviors. For instance, instantaneous spikes in current or temperatures that often lead to failure/degradation in Lithium-ion batteries are not captured by $H_\infty$/LQR metrics~\cite{mouraCST}. Hence, one often uses optimal control approaches such as MPC \cite{Guay} to enforce such transient specifications. Although widely used in ordinary differential equation (ODE) systems, such predictive methods require forward simulation of the system dynamics -- a computationally expensive step for PDEs. Since the goal is to design a computationally inexpensive control that can certify transient behavior, unlike $H_\infty$/LQR-optimal control, impulse-to-peak (I2P) norm optimal control is a natural candidate. Hence, we develop a computational method to design I2P optimal state-feedback controllers for a large class of PDEs.  

\enlargethispage{\baselineskip}
Due to simplicity and maturity of tools available for ODE optimal control, the standard approach for I2P optimal control of PDEs often involves approximating a PDE by an ODE (\emph{early-lumping}) and then using ODE methods, such as \cite{TOKUNAGA19981473}, to design an I2P-optimal control. However, approximation methods often introduce truncation errors---resulting in spillover effects and a lack of closed-loop stability or performance bounds \cite{balas}. \emph{Late-lumping} approaches, on the other hand, use variational calculus, Hamilton-Jacobi-Bellman (HJB) equation, or a Riccati PDE and can provide provable performance \cite{Lasiecka_Triggiani_2000}, but the controllers must be obtained by solving nonlinear PDEs with unbounded input operators, which cannot be solved exactly in all but the simplest systems. While \cite{mironchenko2020input} developed Lyapunov characterization of this problem, and sidesteps the above limitations, it does not provide a constructive method to find controllers. Thus, existing approaches typically lack \textit{provable} performance, \textit{generality}, or both.

Naturally, to develop a general algorithm that computes controllers with provable performance, one must first separate the method from the model. A traditional example of this is linear matrix inequality (LMI)-based methods for ODE analysis/control, where a single synthesis method works for all ODE systems of a particular class. Unfortunately, a similar method is not available for PDEs due to lack of a general representation that admits such finite-time algorithms. While the semigroup approach \cite{curtain2012introduction} does provide a general state-space representation for PDEs, it is not convenient for LMI-style computational methods. To overcome this, we use the partial integral equation (PIE) representation \cite{shivakumar_extension_2024}---an \textit{equivalent} state-space-like representation for linear PDEs parameterized by bounded integral operators called partial integral (PI) operators. Moreover, PI operators are closed under composition, addition, and adjoint. Thus, unlike with PDEs, various LMI-style conditions with PI operator variables can analogously be defined and \emph{solved} for PIEs. 

This framework of using PIEs to represent PDEs, posing PDE optimal control problem as an equivalent PIE optimal control problem, and using associated computational machinery to solve the PIE control problems has been successfully demonstrated for $H_\infty$-optimal control problems of PDEs \cite{shivakumar_dual_2024}. In this work, we will leverage the universality and computational benefits afforded by the PIE framework to solve I2P-optimal control problem. In that regard, the \textit{key contributions} of this work are: 1) characterizing the I2P-norm of a PDE in terms of its PIE representation, 2) proving equivalence of the I2P-norm for a PIE and its dual, 3) posing the open-loop I2P-norm upper bounding problem and the closed-loop I2P-norm minimization problem as \emph{solvable} convex optimization problems. The resulting analysis and synthesis conditions are computationally implemented using PIETOOLS \cite{shivakumar_pietools_2025} and have been demonstrated on various numerical examples.


\section{Notation and Preliminaries}
We will use lowercase font for functions of time and space, e.g., $x(t)$ and $\mbf x(t,s)$, and uppercase calligraphic font $\mcl A$ for operators on functions. Given $\mcl A$, an operator on a Hilbert space, we denote the adjoint operator w.r.t. canonical inner-product by $\mcl A^*$.
Partial derivatives w.r.t. time and space are denoted by $\partial_t$ and $\partial_s$.
$L_2^n[X]$ denotes the space of $\R^n$-valued Lebesgue square-integrable functions on $X$ and is equipped with the canonical inner-product $\ip{\cdot}{\cdot}_{L_2^n[X]}$. For functions on $\R_+$, we define the supremum (peak) norm as $\norm{x}_\infty = \esssup_{t\ge 0} \norm{x(t)}$ and associated Banach space as $L_\infty[\R_+]$. 
Lastly, we abbreviate $\R^m\times L_2^n[X]$ as $\R L_2^{m,n}[X]$, e.g., if $\begin{bmatrix}
    x \\ \mathbf{x}
\end{bmatrix} \in  \mathbb{R}L_2^{m,n}$ then $x \in \mathbb{R}^m$ and $\mathbf{x}\in L_2^n$.

Next, similar to \cite{shivakumar_dual_2024}, we briefly introduce the Partial Integral (PI) operators, Partial Integral Equations (PIEs), and Linear PI Inequalities (LPIs) optimization problems.

\subsection{The Class of Partial Integral Operators}

4-PI operators, a subclass of general PI operators, are used for the parameterization of PIEs just as matrices are used for the parameterization of linear ODEs. 

\begin{definition}
Given $x \in \mathbb{R}^{m_1}$ and $\mathbf{x} \in L_2^{n_1}[a,b]$, we say $\mathcal{P}:\R L_2^{m_1,n_1}\to \R L_2^{m_2,n_2}$ is a \textbf{4-PI operator} if it can be parameterized as \begin{align}
\left(\mcl P \begin{bmatrix}
    x\\ \mathbf{x}
\end{bmatrix}\right)(s) \coloneqq \begin{bmatrix}
    Px + \int_a^b{Q_1(\theta)x(\theta) d \theta }\\
    Q_2(s)x + \mathcal{R}\mathbf {x}(s)\nonumber
\end{bmatrix},\end{align}
where
\begin{align}
    (\mathcal{R}\mathbf{x})(s)\!\!&=\!\!R_0(s)\mathbf{x}(s) \!+\!\! \int_a^s\!\!{R_1(s,\theta)\mbf{x}(\theta)d\theta}\!+\!\!\int_s^b\!\!{R_2(s,\theta)\mathbf{x}(\theta)d\theta}\nonumber
\end{align}
for some matrix $P$ and matrix-valued polynomials $Q_1, Q_2, R_0, R_1,$ and $R_2$. Given $P,Q_i,R_i$, we denote 4-PI operators in the compact form
$\mcl P= \left[ \begin{array}{c|c}
P & Q_1 \\
\hline
Q_2 & \{R_i\}
\end{array} \right]$.
We denote the class of 4-PI operators as $\Pi_4$. 
\end{definition}
The parameter dimensions are inherited from the dimensions of the map, i.e., $P \in \mathbb{R}^{m_2 \times m_1}$, $Q_1(s) \in \mathbb{R}^{m_2 \times n_1}$, $Q_2(s)\in \mathbb{R}^{n_2\times m_1}$, and $R_0(s), R_1(s,\theta), R_2(s,\theta) \in \mathbb{R}^{n_2 \times n_1}$. In the special case $(m_1,n_1) = (m_2,n_2)$, $\Pi_4$ forms a $*$-subalgebra \cite{shivakumar_extension_2024}. Notably, 4-PI operators are always bounded since their kernels are polynomials.  

\subsection{The Class of Partial Integral Equations}
Next, we examine the class of dynamical systems that are parametrized by these 4-PI operators.
\begin{definition}
    Given 4-PI operators $\{\mcl T, \mcl A, \mcl B, \mcl C, \mcl D\}\subset \Pi_4$, we say $\mbf x(t,\cdot)\in \R L_2^{m,n}, z:\R_+ \to \R^{n_z}$ are governed by a {\bf Partial Integral Equation (PIE)} if $\{\mbf x,z\}$ satisfies
    \begin{align}
        \label{eq:PIE}
        \partial_t\left(\mcl T\mbf x\right)(t) &= \mcl A \mbf x(t) \!+\! \mcl B w(t),\quad z(t) = \mcl C \mbf x(t)\!+\!\mcl D w(t),
    \end{align}
    for some initial conditions $\mcl T\mbf x(0)$ and $w(t) \in \mbb{R}^{n_w}$.
\end{definition}
The initial conditions for PIEs are often defined on $\mcl T\mbf x$ instead of $\mbf x$ because, by the nature of PIE representation of PDEs, initial conditions on $\mcl T\mbf x$ are wellposed but not necessarily on $\mbf x$.

Since the 4-PI operators are linear, bounded, and form a *-subalgebra, they are a natural extension of state-space ODE representation to infinite-dimensional spaces. In fact, various linear PDE and time-delay systems \cite{shivakumar_extension_2024} can be described using the form Eq. \eqref{eq:PIE}. For example, consider a transport equation, $\partial_t v(t,s) = \partial_s v(t,s)$ with boundary condition $v(t,0) = 0$. By defining $\mbf{x}(t,s) = \partial_s v(t,s)$ and using the Fundamental Theorem of Calculus, $v(t,s) = \int_0^s \mbf{x}(t,\zeta) \ d\zeta$. This is of the form $\partial_t(\mcl T \mbf{x})(t) = \mcl A\mbf{x}(t)$ where $\mcl T\mbf{x} = \int_0^s \mbf{x}(t,\zeta) \ d\zeta $, and $\mcl A = 1$. We illustrate the class of PDEs considered in this work in the following subsection.


\subsection{The Class of Partial Differential Equations}
Having presented a general class of PIEs, we next present the class of PDEs that admit such a representation. 
For illustration, we consider a class of $2^{nd}$-order PDEs in a single spatial variable on a compact domain, $s \in [a, b]$. 
Given a PDE state $\mbf x = [\mbf x_1, \mbf x_2,\mbf x_3]^\top$, where spatial and time dependence are sometimes omitted for brevity, we can represent such PDEs as 
\begin{align}\label{eq:PDE}
    \partial_t \mbf{x}(s,t) &= \mcl A_p \mbf x(s,t) + B_{21}(s)w(t) + B_{22}(s)u(t),\\
    z(t) &= \mcl C \mbf x(t) + \mcl D_{11} w(t) + \mcl D_{12} u(t),\nonumber
    \end{align}
    $\mbf x_c(s,t) = 
        [\mbf x_2, \mbf x_3, \partial_s\mbf x_{3}]^\top$,
where the operators $\mcl A_p$ and $\mcl C$ are defined as
\begin{align}
    (\mcl A_p \mbf x)(s,t)
    &\coloneqq A_0(s) \mbf x+ A_1(s)[\partial_s\mbf x_{2},
        \partial_s\mbf x_{3}]^\top+A_2(s) \partial_s^2\mbf x_{3} \nonumber,\\
    (\mcl C \mbf x)(t) &\coloneqq \begin{multlined}[t]
         C_{10} [
        \mbf x_c(a,t),\mbf x_c(b,t)]^\top + \int_a^b \Bigl(C_a(s)\mbf{x}\\+C_b(s) 
        [\partial_s\mbf x_{2},\partial_s\mbf x_{3}]^\top\Bigr) ds.  \end{multlined}\nonumber
\end{align}
For all $t\ge 0$, $\mbf x(\cdot, t)$ lies in the domain
   \[D(\mcl A_p) \!\coloneqq\! \left\lbrace \mat{\mbf{x}(s,t) \in L_2^{n_1}[a,b] \times W_{2,1}^{n_2} [a,b] \times W_{2,2}^{n_3}[a,b] \colon \nonumber \\B
        [\mathbf{x}_c(a), \mathbf{x}_c(b)]^\top = 0} \nonumber\right\rbrace,\]
where $B$ defines boundary conditions on the $\mbf x$, and $W_{2,p}^n$ is the canonical Sobolev space, i.e., $\mbf x\in W_{2,p}^n$ if $\partial_s^k\mbf x\in L_2^n$ for all $k\le p$.
If the conditions of \textit{PIE-compatibility} (see \cite[Sec. IV-A]{shivakumar_extension_2024}) are satisfied, any PDE parameterized in the above form can be equivalently represented as a PIE \eqref{eq:PIE}.

\subsection{Solving LPIs and 4-PI Operator Inversion}
Any \textit{PIE-compatible} PDE of the form \eqref{eq:PDE} can be represented as PIEs using explicit formulae in \cite{shivakumar_extension_2024} (or by using the open-source PIETOOLS toolbox \cite{shivakumar_pietools_2025}). Thus, we implicitly assume that the PDE is PIE compatible and the PIE parameters, ($\mcl{T}, \mcl{A}, \mcl{B}, \mcl{C}, \mcl{D}$), are obtained using available tools. In addition, the results herein are presented as linear PI inequality (LPI) optimization problems, which have the form
\begin{align}\label{eq:LPI}
\min_{\mcl P\in \Pi_4}&\quad   f(\mcl P),\quad 
s.t., \quad g_i(\mcl P) = 0,\; h_j(\mcl P)\le 0,
\end{align}
where $f$, $g_i$, and $h_j$ are linear functionals involving PI decision variables $\mcl P$. For example, given a PIE of the form $\partial_t (\mcl T\mbf x)(t) = \mcl A \mbf x(t)$, Lyapunov stability test can be formulated as
\[
\mcl P\succ 0, \;\; \mcl T^*\mcl P\mcl A+\mcl A^*\mcl P\mcl T\preceq 0.
\]
Problems of the above form are solved computationally by parameterizing positive operators using positive matrices. Specifically, one can parameterize $\mcl P = \mcl Z^*P\mcl Z$ using matrix $P>0$ and constrain
\[
\mcl T^*\mcl Z^*P\mcl Z\mcl A+ \mcl A^*\mcl Z^*P\mcl Z\mcl T = -\mcl Z_d^* Q\mcl Z_d,
\]
where $\mcl Z$ and $\mcl Z_d$ are 4-PI operator bases and matrix $Q\ge 0$. Since, 4-PI operator kernels are polynomials, the above constraint is merely a polynomial equality constraint, and thus, is equivalent to constraints on coefficients of polynomials stored in $P$ and $Q$. Thus, infinite-dimensional LPI problem is transformed to finite-dimensional LMI problem and can be solved using interior-point methods. Details of the implementation and procedure of solving LPI constrained problems can be found in \cite{shivakumar_pietools_2025} and \cite{PEET2021109473}.

Lastly, controller synthesis problems often require inversion of 4-PI operators, which can be calculated numerically using the formulae in \cite[Sec. VII]{shivakumar_dual_2024}. The PDE to PIE conversion, LPI optimization, and controller gain reconstruction will be performed using PIETOOLS.


\section{Duality in PIEs}

Every PIE-compatible PDE has an \textit{equivalent} PIE representation, i.e., well-posedness, stability, and $L_2$-gain bound of the PDE implies the same for its PIE representation. However, the properties of the dual system are more critical to controller synthesis problems. Unlike the dual of a PDE, the dual of a PIE is readily available and is defined below.

\begin{definition}\label{def:dualPIE}
Given a PIE Eq. \eqref{eq:PIE} defined by 4-PI operators $\{\mcl T, \mcl A,\mcl B, \mcl C,\mcl D\}\subset \Pi_4$, we say $\bar{\mbf x}:\R_+\to \R L_2^{m,n}$, $\bar z:\R_+\to \R^{n_w}$ are governed by \textbf{the dual PIE}, if $\{\bar{\mbf x},\bar z\}$ satisfies 
\begin{align}
\partial_t(\mcl T^*  \bar{\mbf x})(t)\!=\! \mcl A^* \bar{\mbf x}(t)\!+\!\mcl C^* \bar{w}(t),\; \bar{z}(t)\!=\!\mcl B^*  \bar{\mbf x}(t)\!+\!\mcl D^*\bar w(t), \label{PIEd}
\end{align}
for some initial conditions $\mcl T^*\bar{\mbf x}(0)$ and $\bar w\in L_2^{n_z}[\R_+]$.
\end{definition}
    Since the PI parameters of the primal system, Eq. \eqref{eq:PIE}, lie in a *-subalgebra, the dual system, Eq. \eqref{PIEd}, is also parameterized by PI parameters and admits the same form. Moreover, one can show that a PIE and its dual possess the same stability property and $L_2$-gain bound \cite{shivakumar_dual_2024}.

Both the structural and behavioral equivalence between the primal and dual play a crucial role in the controller synthesis problems. The former allows any computational method developed for the primal to be applicable for the dual. The latter allows one to indirectly establish properties of the primal by proving properties of its dual. For example, the state-feedback controller synthesis for primal is difficult since the Lyapunov stability conditions for the closed-loop system are bilinear (non-convex) in Lyapunov function parameter ($\mcl P$) and controller gains ($\mcl K$): 
\[
\mcl P\succ 0, \; (\mcl A+\mcl B\mcl K)^*\mcl P\mcl T+\mcl T^*\mcl P(\mcl A+\mcl B\mcl K) \preceq 0.
\]
However, since the dual is defined by the same class of parameters and inherits the same stability properties, one can instead find a stabilizing controller by examining Lyapunov conditions for the \textbf{dual} of the closed-loop system, which is a convex problem in decision variables ($\mcl P, \mcl Z=\mcl K\mcl P$):
\[
\mcl P\succ 0, \; (\mcl A\mcl P+\mcl B\mcl Z)\mcl T^*+\mcl T(\mcl P\mcl A^*+\mcl Z^*\mcl B^*) \preceq 0.
\]
By solving the dual constraints, one can find the controller gains $\mcl K = \mcl Z\mcl P^{-1}$ and the associated Lyapunov function parameter $\mcl P$ that proves the closed-loop stability of the primal system. Our objective in this paper is to extend this method for impulse-to-peak norm optimal controller synthesis, and this requires:
\begin{itemize}
    \item finding optimization-based characterization of the impulse-to-peak norm of a PIE Eq. \eqref{eq:PIE}.
    \item showing that impulse-to-peak norm of the primal Eq. \eqref{eq:PIE} and its dual Eq. \eqref{PIEd} are equal.
\end{itemize}



\subsection{Impulse-to-Peak Norm Calculation of a PIE}
First, we will formally define the impulse-to-peak norm of a PIE and establish the equivalence of this system norm in primal and dual PIEs. 
\begin{definition}\label{def:ip-normsystem}
    Given $\{\mcl T, \mcl A, \mcl B, \mcl C\}$, we use $G(\mcl T, \mcl A,\mcl B,\mcl C)$ to denote a PIE with impulse inputs of the form
\begin{align}
\partial_t (\mcl T \mbf x)(t)&= \mcl A \mbf x(t)\!+\!\mcl B w(t),\; \mcl T\mbf x(0) =0,\; z(t)=\mcl C \mbf x(t), \label{PIE2}
\end{align}
where $\mbf x(t) \in \R L_2^{m,n}[a,b]$ is the state, $w(t)$ is an \textbf{impulsive disturbance} of the form $w(t) = \delta(t)v$ with $v \in \mathbb{R}^{n_w}$, and $z(t) \in \R^{n_z}$ is the output.
\end{definition} Then, we can define the impulse-to-peak norm using an auxiliary system where \textit{impulsive disturbance} is replaced by an \textit{initial condition}. 
\begin{definition}\label{def:h2norm}
Given $\{\mcl T, \mcl A, \mcl B, \mcl C\}$, define the auxiliary PIE as 
\begin{align}\label{eqn:PIEaux}
\partial_t (\mcl T \mbf x(t))&= \mcl A \mbf x(t),\quad \mcl T \mbf{x}(0) =\mcl{B} v,\quad z(t)=\mcl C \mbf x(t),
\end{align}
where $v \in \R^{n_w}$. Then, the impulse-to-peak norm of system in Eq.~\eqref{PIE2} is given by
\[
 \norm{G(\mcl T, \mcl A,\mcl B,\mcl C)}_{ip}:=\sup_{\substack{z,\mbf x\, \text{satisfy~\eqref{eqn:PIEaux}}\\ \norm{v}=1}} \norm{z}_{L_\infty}.
\]
\end{definition}
\noindent In simpler words, impulse response of Eq. \eqref{PIE2} is equal to initial condition response of Eq. \eqref{eqn:PIEaux}. This equivalence can be established as follows: Let $w_n(t)\in L_1[\R_+]$ with $w_n(t)\to v\delta(t)$ (converges in a distributional sense). Assuming well-posedness (and by continuity of the input to state map \cite[Lemma 3.1.5]{curtain2012introduction}), we conclude that the sequence of weak solutions of \eqref{PIE2} converges to the weak solution of \eqref{eqn:PIEaux}.

Similar to Def. \ref{def:h2norm}, we can define an auxiliary PIE for the dual of Eq. \eqref{PIE2} as 
\begin{align}\label{eqn:PIEdaux}
\partial_t(\mcl T^* \bar{\mbf x})(t)&= \mcl A^* \bar{\mbf x}(t),\; \mcl T^* \bar{\mbf x}(0) =\mcl{C}^* \bar v,\;\bar z(t)=\mcl B^* \bar{\mbf x}(t),
\end{align}
 and corresponding I2P-norm of $G(\mcl T^*,\mcl A^*,\mcl C^*,\mcl B^*)$ as
\[
 \norm{G(\mcl T^*, \mcl A^*,\mcl C^*,\mcl B^*)}_{ip}:=\sup_{\substack{\bar{z},\bar{\mbf x}\, \text{satisfy~\eqref{eqn:PIEdaux}}\\ \norm{\bar v}=1}} \norm{\bar z}_{L_\infty}.
\]

\subsection{Equivalence in Impulse-to-peak Norm}
Since impulse response of a PIE and initial condition response of its auxiliary system have equivalent I2P-norms, we must now establish that initial condition responses of Eq. \eqref{eqn:PIEaux} and Eq. \eqref{eqn:PIEdaux} are equivalent---implying impulse response of $G(\mcl T,\mcl A,\mcl B,\mcl C)$ and $G(\mcl T^*,\mcl A^*,\mcl C^*,\mcl B^*)$ are equivalent.

\begin{theorem}\label{thm:dualityI2P}
    Given $\mcl{T,A}, \mcl B, \mcl C \in \Pi_4$, if the PIE Eq. \eqref{eqn:PIEaux} and its dual have well-posed solutions, then
    \[
    \norm{G(\mcl T, \mcl A,\mcl B,\mcl C)}_{ip}=\norm{G(\mcl T^*, \mcl A^*,\mcl C^*,\mcl B^*)}_{ip}.
    \]
\end{theorem}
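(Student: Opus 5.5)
Both norms can be written as the supremum of the same bilinear expression, so we plan to prove equality by exhibiting that expression. Let $\mcl S(t)$ be the (generalized) semigroup of \eqref{eqn:PIEaux}: it maps the initial datum $\mcl T\mbf x(0)$ to $\mbf x(t)$. Then the primal output is $z(t)=\mcl C\mcl S(t)\mcl B v$. Well-posedness gives us this map, and we write the impulse response kernel as $H(t):=\mcl C\mcl S(t)\mcl B\in\R^{n_z\times n_w}$. In the same way, let $\bar{\mcl S}(t)$ be the solution map of the dual \eqref{eqn:PIEdaux}, which maps $\mcl T^*\bar{\mbf x}(0)$ to $\bar{\mbf x}(t)$. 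The dual output is then $\bar z(t)=\bar H(t)\bar v$ with $\bar H(t):=\mcl B^*\bar{\mcl S}(t)\mcl C^*$.

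\textbf{Step 1: the dual kernel is the transpose.} The key step is to show that $\bar H(t)=H(t)^\top$ for every $t\ge0$. We will prove this by a duality (Lagrange-identity) argument. Fix $t>0$. Let $\mbf x(\tau)$ solve \eqref{eqn:PIEaux} with $\mcl T\mbf x(0)=\mcl Bv$, and let $\bar{\mbf x}(\sigma)$ solve \eqref{eqn:PIEdaux} with $\mcl T^*\bar{\mbf x}(0)=\mcl C^*\bar v$. Consider
\[
\phi(\tau)=\ip{\mcl T\mbf x(\tau)}{\bar{\mbf x}(t-\tau)}=\ip{\mbf x(\tau)}{\mcl T^*\bar{\mbf x}(t-\tau)}, \qquad \tau\in[0,t].
\]
For smooth (classical) solutions we differentiate $\phi$. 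Since $\mcl T$ and the inner product are bounded and bilinear, the product rule gives
\[
\phi'(\tau)=\ip{\mcl A\mbf x(\tau)}{\bar{\mbf x}(t-\tau)}-\ip{\mbf x(\tau)}{\mcl A^*\bar{\mbf x}(t-\tau)}=0 .
\]
So $\phi(t)=\phi(0)$. Writing out both endpoints:
\begin{itemize}
\item At $\tau=t$: $\phi(t)=\ip{\mbf x(t)}{\mcl T^*\bar{\mbf x}(0)}=\ip{\mbf x(t)}{\mcl C^*\bar v}=\bar v^\top\mcl C\mbf x(t)=\bar v^\top H(t)v$.
\item At $\tau=0$: $\phi(0)=\ip{\mcl T\mbf x(0)}{\bar{\mbf x}(t)}=\ip{\mcl Bv}{\bar{\mbf x}(t)}=v^\top\mcl B^*\bar{\mbf x}(t)=v^\top\bar H(t)\bar v$.
\end{itemize}
Since $v$ and $\bar v$ are arbitrary, $\bar H(t)=H(t)^\top$.

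\textbf{Step 2: equality of the norms.} With Step 1 in hand, we conclude by a sup-interchange:
\[
\norm{G(\mcl T,\mcl A,\mcl B,\mcl C)}_{ip}=\sup_{\norm v=1}\esssup_t\norm{H(t)v}=\esssup_t\norm{H(t)}_{2},
\]
because the two suprema commute. The spectral norm is invariant under transposition, so this equals $\esssup_t\norm{H(t)^\top}_2$, which by the same interchange is $\norm{G(\mcl T^*,\mcl A^*,\mcl C^*,\mcl B^*)}_{ip}$.

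\textbf{Main obstacle: rigor of Step 1.} The solutions are in general only weak or mild, and the initial data $\mcl Bv$ and $\mcl C^*\bar v$ need not give classical solutions. So $\phi$ may not be differentiable, and $\mcl A\mbf x(\tau)$ may not be defined pointwise. I would handle this in three moves:
\begin{itemize}
\item Use well-posedness to obtain a strongly continuous solution map $\mcl T\mbf x(0)\mapsto\mbf x(t)$, and likewise for the dual.
\item Approximate the initial data by data in the domain of the generator, i.e. initial conditions for which classical solutions exist.
\item Run the identity for these approximants, then pass to the limit using continuity of the solution maps and boundedness of the 4-PI operators $\mcl B$, $\mcl C$, $\mcl T$.
\end{itemize}
If the generator formulation is awkward, an alternative is the integrated form of the weak solution:
\[
\mcl T\mbf x(t)-\mcl T\mbf x(0)=\mcl A\int_0^t\mbf x\,d\tau .
\]
With this, the identity follows from a resolvent computation: in the Laplace domain, $\mcl C(s\mcl T-\mcl A)^{-1}\mcl B$ has adjoint $\mcl B^*(\bar s\mcl T^*-\mcl A^*)^{-1}\mcl C^*$, and uniqueness of the Laplace transform then gives $\bar H=H^\top$ almost everywhere.
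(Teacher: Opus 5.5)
Your proposal is correct and takes essentially the paper's route: your constant function $\phi(\tau)=\ip{\mcl T\mbf x(\tau)}{\bar{\mbf x}(t-\tau)}$ is the paper's integration-by-parts identity along the time-reversed dual trajectory, which yields $\ip{\bar v}{z(t;v)}=\ip{v}{\bar z(t;\bar v)}$, and your transpose/spectral-norm step is the paper's interchange of the suprema over $v$, $\bar v$ and $t$. The paper simply assumes $\bar{\mbf x}$ is differentiable, so your density and Laplace-transform remarks are extra rigor rather than a gap in your argument.
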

\begin{proof}
    Suppose $\{\mbf x, z\}$ satisfy the PIE, Eq.~\eqref{eqn:PIEaux}, and $\{\bar{\mbf x}, \bar z\}$ satisfy the dual PIE, Eq.~\eqref{eqn:PIEdaux}. As in~\cite{shivakumar_dual_2024}, we can apply integration by parts to get
    \begin{align*}
       I_1:=& \int_0^t \ip{\bar{\mbf x}(t-s)}{\partial_s(\mcl T \mbf x(s))}_{\R L_2}ds
       \\&= \ip{\bar{\mbf x}(0)}{\mcl T \mbf x(t)}_{\R L_2}-\ip{\bar{\mbf x}(t)}{\mcl T \mbf x(0)}_{\R L_2}\\& \underbrace{-\int_0^t \ip{\partial_s\bar{\mbf x}(t-s)}{\mcl T \mbf x(s)}_{\R L_2} ds}_{:=I_2}.
    \end{align*}
Moreover, from the dynamics Eq. \eqref{eqn:PIEaux} and Eq. \eqref{eqn:PIEdaux}, we have
\begin{align*}
    I_1&=\int_0^t \ip{\bar{\mbf x}(t-s)}{\mcl A \mbf x(s)}ds 
    = \int_0^t \ip{\mcl A^*\bar{\mbf x}(t-s)}{\mbf x(s)}d\theta\\
    &= -\int_0^t \ip{\partial_s(\mcl T^*\bar{\mbf x})(t-s)}{\mbf x(s)}d\theta = I_2,
\end{align*}
if $\bar{\mbf x}$ is differentiable.
Therefore, $\ip{\mcl T^*\bar{\mbf x}(0)}{\mbf x(t)}_{\R L_2}=\ip{\bar{\mbf x}(t)}{\mcl T\mbf x(0)}_{\R L_2}$. Next, substituting the initial conditions $\mcl T^*\bar{\mbf x}(0) = \mcl C^* \bar v$ and $\mcl T \mbf x(0)=\mcl B v$, we get
\[
    \ip{\mcl C^* \bar v}{ \mbf x(t)}_{\R L_2}=\ip{\bar{\mbf x}(t)}{\mcl B v}_{\R L_2}.
\]
This implies, for any $t\ge 0$,
\begin{align*}
\ip{\bar v}{ z(t)}_{\R}&= \ip{\bar v}{ \mcl C \mbf x(t)}_{\R}=\ip{\mcl C^*\bar v}{ \mbf x(t)}_{\R L_2}\\
&=\ip{\bar{\mbf x}(t)}{\mcl Bv}_{\R L_2}=\ip{\mcl B^*\bar{\mbf x}(t)}{v}_{\R} =\ip{\bar{z}(t)}{v}_{\R}.
\end{align*}
For clarity, let us make explicit the dependence of $z$ and $\bar z$ on $v$ and $\bar v$, respectively, to rewrite the above identity as $\ip{\bar v}{z(t;v)}_{\R} = \ip{v}{\bar z(t;\bar v)}_{\R}$.

By definition of the vector norm, we have
\begin{align*}
    \sup_{\norm{\bar{v}}=1}\norm{\bar{z}(t;\bar{v})}_{\mathbb{R}} &= \sup_{\norm{\bar v}_2=1,\norm{v}=1}\!\!\!\!\ip{v}{\bar z(t;\bar v)}_{\R}\\
    & = \sup_{\norm{\bar v}_2=1,\norm{v}=1}\!\!\!\!\ip{\bar v}{z(t;v)}_{\R}\!\!=\!\! \sup_{\norm{v}=1}\norm{z(t;v)}_{\mathbb{R}}.
\end{align*}
Since this holds for any $t\in \R_+$, we have 
\begin{align*}
&\sup_{\norm{v}=1} \norm{z}_{\infty} = \sup_{\norm{v}=1}\left(\sup_{t\ge 0} \norm{z(t;v)}_{\mathbb{R}}\right)\\
&\quad =\sup_{t\ge 0} \left(\sup_{\norm{v}=1} \norm{z(t;v)}_{\mathbb{R}}\right)=\sup_{t\ge 0} \left(\sup_{\norm{\bar v}=1} \norm{\bar z(t;\bar v)}_{\mathbb{R}}\right)\\
&\quad =\sup_{\norm{\bar v}=1} \left(\sup_{t\ge 0} \norm{\bar z(t;\bar v)}_{\mathbb{R}}\right)= \sup_{\norm{\bar v}=1} \norm{\bar z}_{\infty}. 
\end{align*}
\end{proof}

\section{Linear PI Inequality Formulations}

Having established the duality result for the impulse-to-peak norm of PIEs, we will next present a convex optimization based characterization of the norm using Lyapunov functions. 
\begin{theorem}\label{thm:Normcoercive}
    Given 4-PI operators ${\mathcal{T},\mathcal{A},\mathcal{B},\mathcal{C}}$, let $\{\mbf x, z\}$ satisfy a PIE of the form
    Eq. \eqref{eqn:PIEaux}. Suppose there exists a $\gamma>0$ and a 4-PI operator $\mcl Q \succeq 0$ such that one of the following two conditions hold:
    \begin{enumerate}[leftmargin=*]
        \item[(i)] $\mathcal{B}^*\mcl Q\mathcal{B}\preceq I$, $\mathcal{A}^*\mcl Q\mathcal{T}+\mathcal{T}^*\mcl Q\mathcal{A} \preceq 0$, $\frac{1}{\gamma^2}\mathcal{C}^*\mathcal{C} \preceq \mathcal{T}^*\mcl Q\mathcal{T}$.
        \item[(ii)] $\mathcal{C}\mcl Q\mathcal{C}^*\preceq I$, $\mathcal{A}\mcl Q\mathcal{T}^*+\mathcal{T}\mcl Q\mathcal{A}^* \preceq 0$, $\frac{1}{\gamma^2}\mathcal{B}\mathcal{B}^* \preceq \mathcal{T}\mcl Q\mathcal{T}^*$.
    \end{enumerate} 
    Then, $\norm{G(\mcl T, \mcl A,\mcl B,\mcl C)}_{ip} \le \gamma$.
\end{theorem}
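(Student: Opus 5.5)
Under condition (i) we use a standard Lyapunov argument on the auxiliary PIE \eqref{eqn:PIEaux}. Under condition (ii) we apply the same argument to the dual system and then invoke Theorem~\ref{thm:dualityI2P}.

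\textbf{Condition (i).} Let $\{\mbf x,z\}$ solve \eqref{eqn:PIEaux} with $\norm{v}=1$, and define the storage functional
\[
V(t)=\ip{\mcl T\mbf x(t)}{\mcl Q\mcl T\mbf x(t)}_{\R L_2}\ge 0 .
\]
The argument has three steps.
\begin{enumerate}[leftmargin=*]
\item \emph{Decay of $V$.} Differentiating along trajectories and using $\partial_t(\mcl T\mbf x)=\mcl A\mbf x$ gives
\[
\dot V(t)=\ip{\mbf x(t)}{(\mcl A^*\mcl Q\mcl T+\mcl T^*\mcl Q\mcl A)\mbf x(t)}\le 0 ,
\]
so $V(t)\le V(0)$ for all $t\ge 0$.
\item \emph{Bound on $V(0)$.} Since $\mcl T\mbf x(0)=\mcl B v$,
\[
V(0)=\ip{v}{\mcl B^*\mcl Q\mcl B v}\le \norm{v}^2=1 .
\]
\item \emph{Output bound.} By the third inequality in (i), for each $t$,
\[
\norm{z(t)}^2=\ip{\mbf x(t)}{\mcl C^*\mcl C\mbf x(t)}\le \gamma^2\ip{\mbf x(t)}{\mcl T^*\mcl Q\mcl T\mbf x(t)}=\gamma^2V(t)\le\gamma^2 .
\]
\end{enumerate}
Taking the essential supremum over $t$ and then the supremum over $v$ gives $\norm{G(\mcl T,\mcl A,\mcl B,\mcl C)}_{ip}\le\gamma$.

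\textbf{Condition (ii).} The three inequalities in (ii) are exactly condition (i) written for the dual data. Replacing $(\mcl T,\mcl A,\mcl B,\mcl C)$ by $(\mcl T^*,\mcl A^*,\mcl C^*,\mcl B^*)$ in (i) yields
\[
(\mcl C^*)^*\mcl Q\mcl C^*=\mcl C\mcl Q\mcl C^*\preceq I,\qquad \mcl A\mcl Q\mcl T^*+\mcl T\mcl Q\mcl A^*\preceq 0,\qquad \tfrac1{\gamma^2}\mcl B\mcl B^*\preceq\mcl T\mcl Q\mcl T^* .
\]
Because $\Pi_4$ is closed under adjoints, the dual is again a PIE with 4-PI parameters. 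Repeating the case (i) argument on \eqref{eqn:PIEdaux} with $V(t)=\ip{\mcl T^*\bar{\mbf x}}{\mcl Q\mcl T^*\bar{\mbf x}}$ gives $\norm{G(\mcl T^*,\mcl A^*,\mcl C^*,\mcl B^*)}_{ip}\le\gamma$. Theorem~\ref{thm:dualityI2P} then transfers this bound to the primal system.

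\textbf{Main obstacle: rigor of the differentiation step.} The computation of $\dot V$ needs $\mcl T\mbf x$ to be differentiable in time, with the chain rule $\tfrac{d}{dt}\ip{\mcl T\mbf x}{\mcl Q\mcl T\mbf x}=2\ip{\partial_t\mcl T\mbf x}{\mcl Q\mcl T\mbf x}$ valid; this uses self-adjointness of $\mcl Q$, which holds since $\mcl Q\succeq0$. I would first assume well-posed (classical or strong) solutions, as Theorem~\ref{thm:dualityI2P} does. For mild solutions I would pass to the limit: approximate the initial condition by smooth ones, use continuity of the solution map, and note that the pointwise bound $\norm{z(t)}\le\gamma$ survives the limit almost everywhere.

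A second point is that case (ii) needs well-posedness of both the primal and the dual, which is what Theorem~\ref{thm:dualityI2P} requires; I would state this as a standing assumption. Finally, the set of admissible $v$ must be understood as $\mcl T\mbf x(0)=\mcl Bv$ with a corresponding $\mbf x(0)$; the argument only uses $\mcl T\mbf x(0)$, so this causes no difficulty.
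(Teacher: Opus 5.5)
Your proposal is correct and follows the paper's proof essentially step for step. For (i) it uses the same storage functional $V=\ip{\mcl T\mbf x}{\mcl Q\mcl T\mbf x}$ and the same three facts $V(0)\le\norm{v}^2$, $\dot V\le 0$, $\norm{z(t)}^2\le\gamma^2V(t)$; for (ii) it runs the same argument on the dual auxiliary PIE \eqref{eqn:PIEdaux} and transfers the bound via Theorem~\ref{thm:dualityI2P}. Your explicit flagging of the primal/dual well-posedness needed in case (ii) and of the regularity needed to differentiate $V$ usefully states assumptions the paper leaves implicit; note also that the product rule gives $\dot V=\ip{\mcl A\mbf x}{\mcl Q\mcl T\mbf x}+\ip{\mcl T\mbf x}{\mcl Q\mcl A\mbf x}$ directly, so no appeal to self-adjointness of $\mcl Q$ is needed there.
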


\begin{proof}[Proof of (i)]
     Let $\{\mbf x, z\}$, satisfy Eq. \eqref{eqn:PIEaux} with initial conditions $\mcl T\mbf x(0) = \mcl Bv$. Define a storage function $V(\mbf x) = \langle\mathcal{T}\mbf x, \mcl Q\mathcal{T}\mbf x\rangle$ where $\mcl Q\succeq 0$ satisfies constraints in \textit{(i)}. Then, $V(0) = \ip{\mcl B v}{\mcl Q\mcl Bv} = \ip{v}{\mcl B^*\mcl Q\mcl B v} \leq \norm{v}^2$. Differentiating $V$ along the solutions of Eq. \eqref{eqn:PIEaux}, we get 
\begin{align*}
&\dot{V} = \langle \partial_t \mathcal{T}\mbf x, \mcl Q \mathcal{T}\mbf x\rangle+\langle \mathcal{T}\mbf x, \mcl Q \partial_t \mathcal{T}\mbf x\rangle\\
&= \langle \mathcal{A}\mbf x, \mcl Q \mathcal{T}\mbf x\rangle\!+\!\langle \mathcal{T}\mbf x, \mcl Q \mathcal{A}\mbf x\rangle \!\! =\!\! \langle \mbf x, (\mathcal{A}^* \mcl Q \mathcal{T}\!+\!\mathcal{T}^*\mcl Q\mathcal{A})\mbf x\rangle\!\le\! 0.    
\end{align*}
Lastly, we have that 
\begin{align*}
\frac{1}{\gamma^2}\norm{z(t)}^2&=\frac{1}{\gamma^2}\ip{\mcl C\mbf x(t)}{\mcl C\mbf x(t)}=\ip{\mbf x(t)}{\frac{1}{\gamma^2}\mcl C^*\mcl C\mbf x(t)}\\
&\le\ip{\mbf x}{\mcl T^*\mcl Q\mcl T\mbf x} = V(\mbf x(t)).
\end{align*}
Thus, $\lVert z(t) \rVert^2 \leq \gamma^2 V(t) \leq \gamma^2 V(0) \leq \gamma^2 \lVert v\rVert^2$ for all $t>0$ and hence, 
\[
\norm{G(\mcl T,\mcl A,\mcl B,\mcl C)}_{ip}=\sup_{\norm{v}=1}\norm{z}_\infty=\sup_{\lVert v\rVert = 1} \sup_{t\geq 0} \lVert z(t) \rVert \leq \gamma.\]
Alternatively, let $\mcl Q$ satisfy the constraints in \textit{(ii)}. Then, following the steps above one can show that $\norm{G(\mcl T^*,\mcl A^*,\mcl C^*,\mcl B^*)}_{ip}\le \gamma$ where $\bar{\mbf x}, \bar z$ satisfy Eq. \eqref{eqn:PIEdaux} with initial conditions $\mcl T^*\bar{\mbf x}(0)=\mcl C^*\bar v$. From Thm. \ref{thm:dualityI2P}, the primal also satisfies the same bound.
\end{proof}


To alleviate conservatism in the sufficient conditions given in Thm \ref{thm:Normcoercive}, we use a broader class of Lyapunov functions to derive the following LPI for upper bounding the norm. 

\begin{theorem}\label{thm:Normnoncoercive}
    Given 4-PI operators ${\mathcal{T}, \mathcal{A},\mathcal{B},\mathcal{C}}$, let $\{\mbf x, z\}$ satisfy a PIE of the form Eq. (\ref{eqn:PIEaux}).
    Suppose there exists a $\gamma > 0$ and a 4-PI operator $\mcl Q$ such that $\mcl T^*\mcl Q =\mcl Q^* \mcl T \succeq 0$ and one of the following conditions hold:
    \begin{enumerate}
        \item[(i)] $\mcl A^*\mcl Q+\mcl Q^*\mcl A \!\preceq\! 0$, $ \begin{bmatrix}
            \gamma^2I & \mcl C\\ \mcl C^* & \mcl Q^* \mcl  T 
        \end{bmatrix}\!\succeq\! 0$, $ \begin{bmatrix}
            \mcl T^*\mcl Q & \mcl Q^* \mcl B\\ \mcl B^* \mcl Q & I
        \end{bmatrix}\!\succeq\! 0$ 
        \item[(ii)] $\mcl A\mcl Q+\mcl Q^*\mcl A^* \!\preceq\! 0$, $ \begin{bmatrix}
            \gamma^2I \!\!& \mcl B^*\\ \mcl B \!\!& \mcl Q^* \mcl  T^*
        \end{bmatrix}\!\succeq\! 0$, $ \begin{bmatrix}
            \mcl T\mcl Q & \mcl Q^* \mcl C^*\\ \mcl C \mcl Q & I
        \end{bmatrix}\!\succeq\! 0$ 
    \end{enumerate} 
    Then, $\norm{G(\mcl T, \mcl A,\mcl B,\mcl C)}_{ip} \le \gamma$.
\end{theorem}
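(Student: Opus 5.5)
Condition \textit{(i)} is handled with the non-coercive storage function
\[
V(\mbf x)=\ip{\mbf x}{\mcl Q^*\mcl T\mbf x}_{\R L_2}=\ip{\mcl Q\mbf x}{\mcl T\mbf x}_{\R L_2},
\]
which is nonnegative because $\mcl T^*\mcl Q=\mcl Q^*\mcl T\succeq 0$. The plan has three parts, mirroring the proof of Thm.~\ref{thm:Normcoercive}: bound $V$ at $t=0$ by $\norm{v}^2$, show $V$ is nonincreasing, and bound $\norm{z(t)}^2$ by $\gamma^2 V(\mbf x(t))$. Condition \textit{(ii)} then follows by duality.

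For the derivative, I would use the symmetry $\mcl T^*\mcl Q=\mcl Q^*\mcl T$ to write $V(\mbf x)=\ip{\mcl T\mbf x}{\mcl Q\mbf x}$. Differentiating along solutions of Eq.~\eqref{eqn:PIEaux} and using $\partial_t(\mcl T\mbf x)=\mcl A\mbf x$ gives
\[
\dot V=\ip{\mbf x}{(\mcl A^*\mcl Q+\mcl Q^*\mcl A)\mbf x}\le 0 .
\]
This is the main obstacle. Only $\mcl T\mbf x$ is naturally differentiable, while $\mcl Q\mbf x$ is not a function of $\mcl T\mbf x$ in general. To justify the computation, I would first work with sufficiently smooth (classical) solutions, where $\partial_t\mbf x$ exists, and obtain
\[
\dot V=\ip{\partial_t \mbf x}{\mcl T^*\mcl Q\mbf x}+\ip{\mcl Q^*\mcl T\mbf x}{\partial_t\mbf x}=\ip{\mcl A\mbf x}{\mcl Q\mbf x}+\ip{\mcl Q\mbf x}{\mcl A\mbf x}.
\]
I would then pass to general solutions by density and well-posedness, as in \cite{shivakumar_dual_2024}.

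For the two endpoint bounds, the Schur complement of the first matrix inequality in \textit{(i)} (with $\gamma^2 I\succ 0$) gives $\frac{1}{\gamma^2}\mcl C^*\mcl C\preceq \mcl Q^*\mcl T$, so $\norm{z(t)}^2\le \gamma^2V(\mbf x(t))$. For $V(0)$, only $\mcl T\mbf x(0)=\mcl Bv$ is known, not $\mbf x(0)$. I would test the second matrix inequality against the pair $(\mbf x(0),-v)$:
\[
\ip{\mbf x(0)}{\mcl T^*\mcl Q\mbf x(0)}-2\ip{\mcl B^*\mcl Q\mbf x(0)}{v}+\norm{v}^2\ge 0 .
\]
Next I would rewrite $\ip{\mcl B^*\mcl Q\mbf x(0)}{v}=\ip{\mcl Q\mbf x(0)}{\mcl Bv}=\ip{\mcl Q\mbf x(0)}{\mcl T\mbf x(0)}=V(0)$. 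The inequality then reads $V(0)-2V(0)+\norm{v}^2\ge 0$, so $V(0)\le\norm{v}^2$. Chaining the three bounds gives $\norm{z(t)}^2\le\gamma^2\norm{v}^2$ for all $t\ge 0$, and taking suprema as in Thm.~\ref{thm:Normcoercive} yields the bound.

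For \textit{(ii)}, I would apply the argument for \textit{(i)} verbatim to the dual system $G(\mcl T^*,\mcl A^*,\mcl C^*,\mcl B^*)$ of Eq.~\eqref{eqn:PIEdaux}. Replacing $(\mcl T,\mcl A,\mcl B,\mcl C)$ by $(\mcl T^*,\mcl A^*,\mcl C^*,\mcl B^*)$ turns the conditions of \textit{(i)} into those of \textit{(ii)}, with the symmetry condition read as $\mcl T\mcl Q=\mcl Q^*\mcl T^*\succeq 0$, which is how I interpret the stated hypothesis in this case. This gives $\norm{G(\mcl T^*,\mcl A^*,\mcl C^*,\mcl B^*)}_{ip}\le\gamma$, and Thm.~\ref{thm:dualityI2P} transfers the bound to the primal.
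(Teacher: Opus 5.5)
Your proposal is correct and follows the paper's proof essentially step for step. It uses the same storage function $V(\mbf x)=\ip{\mcl T\mbf x}{\mcl Q\mbf x}$, the same test vector $(\mbf x(0),-v)$ in the second matrix inequality to obtain $V(\mbf x(0))\le\norm{v}^2$, the Schur-complement bound $\norm{z(t)}^2\le\gamma^2V(\mbf x(t))$, and duality via Thm.~\ref{thm:dualityI2P} for \textit{(ii)}. Your extra details fill in points the paper's outline leaves implicit: the justification of $\dot V$ through classical solutions, and reading the symmetry hypothesis in \textit{(ii)} as $\mcl T\mcl Q=\mcl Q^*\mcl T^*\succeq 0$, which is consistent with Cor.~\ref{cor:control}\textit{(ii)}.
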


\begin{proof}
We give a proof outline, since it is similar to the proof of Thm. \ref{thm:Normcoercive}.
Let $\{\mbf x, z\}$ satisfy the PIE Eq. \eqref{eqn:PIEaux} with initial conditions $\mcl T\mbf x(0)=\mcl Bv$. Next, define a storage function as $V(\mbf x) = \ip{\mcl T\mbf x}{\mcl Q\mbf x}$. Then, if $\mcl Q$ satisfies the constraints in \textit{(i)}, we have
$\dot{V}(\mbf x(t))\le 0$ and $\norm{z(t)}^2\le \gamma^2V(t)$ for all $t>0$.
        Lastly, for any $v\in \R^{n_w}$ and $\mcl T\mbf x(0)=\mcl Bv$, we have
        \begin{align*}
            0\le &\ip{\bmat{\mbf x(0)\\-v}}{ \bmat{\mcl T^*\mcl Q&\mcl Q^*\mcl B\\\mcl B^*\mcl Q&I} \bmat{\mbf x(0)\\-v}} \\
            &= \ip{\mbf x(0)}{\mcl T^*\mcl Q\mbf x(0)} - 2\ip{\mbf x(0)}{\mcl Q^*\mcl Bv} + \norm{v}^2 \\
            &=\ip{\mbf x(0)}{\mcl Q^*\mcl T\mbf x(0)} - 2\ip{\mbf x(0)}{\mcl Q^*\mcl T\mbf x(0)} + \norm{v}^2 \\
            &= \norm{v}^2-V(\mbf x(0)).
        \end{align*}
        Since $\norm{z(t)}^2\le \gamma^2V(\mbf x(t))\le \gamma^2V(\mbf x(0))\le \gamma^2\norm{v}^2$, we have $\norm{G(\mcl T, \mcl A,\mcl B,\mcl C)}_{ip} \le \gamma$.
        Likewise, satisfaction of \textit{(ii)} implies the norm bound on the dual and the primal PIE response.
\end{proof}


\subsection{Optimal State-feedback Controller Synthesis}
Using the duality results and a Lyapunov function-based characterization of the norm, we can consider the I2P optimal control problem. Specifically, given a PIE of the form,
\begin{align}
    \partial_t(\mcl T\mbf x)(t) &= \mcl A\mbf x(t) + \mcl B w(t) + \mcl B_2u(t), \quad \mcl T\mbf x(0) = 0,\notag\\
    z(t) &= \mcl C\mbf x(t) + \mcl D u(t), \label{eq:PIEcontrol}
\end{align}
the goal is to design a state-feedback $u(t) = \mcl K\mbf x(t)$ such that impulse-input to peak-output norm is optimal. 

Substituting the control policy, the closed-loop system takes the form $G(\mcl T, \mcl A+\mcl B_2\mcl K,\mcl B, \mcl C+\mcl D\mcl K)$ where $G$ is defined in Definition \ref{def:ip-normsystem}. Thus, the control task can be stated as: Given a closed-loop system $G$, find $\mcl K$ minimizing $\gamma$ such that $\sup_{\norm{v}=1} \norm{z}_\infty\le \gamma$.  

Note the auxiliary system admits the form
\begin{align}\label{eqn:PIEauxwinput}
\partial_t(\mcl T\mbf x)(t) &= (\mcl A+\mcl B_2\mcl K)\mbf x(t), \quad\mcl T\mbf x(0) = \mcl Bv, \notag\\
z(t) &= (\mcl C+\mcl D\mcl K)\mbf x(t).
\end{align}



If we apply Thm. \ref{thm:Normcoercive} to the auxiliary system defined above, the constraints in \textit{(i)} are bilinear in $\mcl Q$ and $\mcl K$ and intractable. However, contingent on the change of variable $\mcl Z=\mcl K \mcl Q$, the constraints in \textit{(ii)} are linear and convex. Since \textit{(i)} and \textit{(ii)} are equivalent, we can solve \textit{(ii)} to search for a controller $\mcl K = \mcl Z\mcl Q^{-1}$ that achieves optimal $\gamma$ for the closed-loop system.


\begin{corollary}\label{cor:control}
    Given 4-PI operators $\{\mathcal{T}, \mathcal{A},\mathcal{B}, \mcl B_2,\mathcal{C}, \mcl D\}$, let $\{\mbf x, z\}$ satisfy a PIE of the form Eq. (\ref{eqn:PIEauxwinput}).
    Suppose there exists a $\gamma > 0$
    such that one of the following sets of conditions hold:
    \begin{enumerate}[leftmargin=6mm]
        \item[(i)] $\mcl Q\succ 0$, $\mcl A\mcl Q \mcl T^*+\mcl B_2 \mcl Z \mcl T^* + \mcl T \mcl Q\mcl A^* + \mcl T \mcl Z^* \mcl B_2^* \preceq 0,\vspace{1mm}\\  \begin{bmatrix}
            \gamma^2I&  \mcl B^*\\ \mcl B & \mcl T\mcl Q\mcl T^*
        \end{bmatrix}\!\succeq\! 0,\;\bmat{I & (\mcl C\mcl Q+\mcl D\mcl Z)\\(\mcl C\mcl Q+\mcl D\mcl Z)^*& \mcl Q}\!\succeq\! 0$. \vspace{3mm}
       
        \item[(ii)] $\mcl Q^*\mcl T^*=\mcl T\mcl Q$, $\mcl A \mcl Q +\mcl B_2 \mcl Z +\mcl Q^* \mcl A^* + \mcl Z^* \mcl B_2^* \preceq 0,\vspace{1mm}\\ \begin{bmatrix}
            \gamma^2I & \mcl B^*\\ \mcl B & \mcl Q^* \mcl  T^* 
        \end{bmatrix}\!\succeq\! 0, \;\begin{bmatrix}
            I & (\mcl C\mcl Q+\mcl D\mcl Z)\\  (\mcl C\mcl Q+\mcl D\mcl Z)^*& \mcl Q^*\mcl T^*
        \end{bmatrix}\!\succeq\! 0$. 
    \end{enumerate}
    Then, under the feedback control $u(t)=\mcl K\mbf x(t)$ where $\mcl K= \mcl Z\mcl Q^{-1}$, we have $\sup_{\norm{v}=1} \norm{z}_\infty \le \gamma$.
\end{corollary}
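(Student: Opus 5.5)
The plan is to reduce both parts of Corollary~\ref{cor:control} to the condition \emph{(ii)} of Thm.~\ref{thm:Normcoercive} or Thm.~\ref{thm:Normnoncoercive}, applied to the closed-loop auxiliary system \eqref{eqn:PIEauxwinput}. That system is $G(\mcl T,\mcl A_K,\mcl B,\mcl C_K)$ with $\mcl A_K:=\mcl A+\mcl B_2\mcl K$ and $\mcl C_K:=\mcl C+\mcl D\mcl K$. We set $\mcl K=\mcl Z\mcl Q^{-1}$, so that $\mcl Z=\mcl K\mcl Q$, and substitute back. This requires $\mcl Q^{-1}$ to exist as a bounded operator. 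In part \emph{(i)} this should come from $\mcl Q\succ 0$, read as coercive. In part \emph{(ii)} invertibility must be assumed implicitly, as the statement does when it writes $\mcl Q^{-1}$. Closedness of $\Pi_4$ under inversion, via the formulae in \cite{shivakumar_dual_2024}, then places $\mcl K$ in $\Pi_4$, so the closed loop is again a PIE.

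For part \emph{(i)}, the substitution $\mcl Z=\mcl K\mcl Q$ makes the Lyapunov inequality
\[
\mcl A\mcl Q\mcl T^*+\mcl B_2\mcl Z\mcl T^*+\mcl T\mcl Q\mcl A^*+\mcl T\mcl Z^*\mcl B_2^*\preceq 0
\]
read exactly as $\mcl A_K\mcl Q\mcl T^*+\mcl T\mcl Q\mcl A_K^*\preceq 0$, using $\mcl Q^*=\mcl Q$. Next, a Schur complement with the $\gamma^2 I\succ 0$ block turns the first block inequality into $\frac{1}{\gamma^2}\mcl B\mcl B^*\preceq \mcl T\mcl Q\mcl T^*$. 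For the second block inequality, note that $\mcl C\mcl Q+\mcl D\mcl Z=\mcl C_K\mcl Q$. Taking the Schur complement with respect to the $I$ block gives $\mcl Q-\mcl Q\mcl C_K^*\mcl C_K\mcl Q\succeq 0$. Congruence by $\mcl Q^{-1}$ then yields $\mcl Q^{-1}\succeq \mcl C_K^*\mcl C_K$.

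At this point there is a mismatch. Thm.~\ref{thm:Normcoercive}\emph{(ii)} requires $\mcl C_K\mcl Q\mcl C_K^*\preceq I$. This follows from $\mcl Q^{1/2}\mcl C_K^*\mcl C_K\mcl Q^{1/2}\preceq I$, since $\norm{M^*M}=\norm{MM^*}$ with $M=\mcl C_K\mcl Q^{1/2}$. Concretely, Schur-complementing the block inequality on the other side gives $I-\mcl C_K\mcl Q\mcl Q^{-1}\mcl Q\mcl C_K^*\succeq 0$, which is $\mcl C_K\mcl Q\mcl C_K^*\preceq I$ directly. With the three conditions of Thm.~\ref{thm:Normcoercive}\emph{(ii)} verified for $G(\mcl T,\mcl A_K,\mcl B,\mcl C_K)$, that theorem, together with the duality in Thm.~\ref{thm:dualityI2P}, gives the bound $\gamma$.

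For part \emph{(ii)}, the same substitution turns the constraints into those of Thm.~\ref{thm:Normnoncoercive}\emph{(ii)} for the closed loop. The inequality $\mcl A_K\mcl Q+\mcl Q^*\mcl A_K^*\preceq 0$ is immediate. The condition $\mcl T\mcl Q=\mcl Q^*\mcl T^*$ is given. The $\gamma$ block matches directly. For the last block, Thm.~\ref{thm:Normnoncoercive} asks for
\[
\bmat{\mcl T\mcl Q&\mcl Q^*\mcl C_K^*\\ \mcl C_K\mcl Q& I}\succeq 0,
\]
while we are given the $2\times 2$ block inequality with entries $I$, $\mcl C_K\mcl Q$ and $\mcl Q^*\mcl T^*$. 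These agree up to a permutation of blocks and the identity $\mcl Q^*\mcl T^*=\mcl T\mcl Q$. The off-diagonal entries, however, are $\mcl C_K\mcl Q$ versus $\mcl Q^*\mcl C_K^*$, which are not adjoint-symmetric unless $\mcl Q$ is self-adjoint. I expect this step to be the main obstacle.

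I would first try to reinterpret the block as a congruence of the correct one. Failing that, I would go back to the storage-function argument of Thm.~\ref{thm:Normnoncoercive} applied to the dual closed loop, where the output operator is $\mcl B^*$ and the initial condition is $\mcl T^*\bar{\mbf x}(0)=\mcl C_K^*\bar v$. There I would check directly that the given block yields $V(\bar{\mbf x}(0))\le\norm{\bar v}^2$, exploiting $\mcl T^*\bar{\mbf x}(0)=\mcl C_K^*\bar v$ as in the displayed computation in that proof. Finally, Thm.~\ref{thm:dualityI2P} transfers the bound to the primal closed loop, assuming well-posedness.
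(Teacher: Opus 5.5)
This is the same argument as the paper's. With $\mcl Z=\mcl K\mcl Q$, your Schur complements (including the one on the $\mcl Q$ block, giving $\mcl C_K\mcl Q\mcl C_K^*\preceq I$) turn part \emph{(i)} into condition \emph{(ii)} of Thm.~\ref{thm:Normcoercive} for $G(\mcl T,\mcl A_K,\mcl B,\mcl C_K)$. Part \emph{(ii)} likewise becomes condition \emph{(ii)} of Thm.~\ref{thm:Normnoncoercive}, and Thm.~\ref{thm:dualityI2P} transfers the bound. Those proofs only use that $\mcl K$ is bounded, so you do not need the claim that $\Pi_4$ is closed under inversion, which is false for polynomial kernels. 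The obstacle you flag in part \emph{(ii)} is not real: $(\mcl C\mcl Q+\mcl D\mcl Z)^*=\mcl Q^*\mcl C_K^*$ by definition, so conjugating the given block by the swap $\bmat{0&I\\I&0}$ gives $\bmat{\mcl Q^*\mcl T^*&\mcl Q^*\mcl C_K^*\\ \mcl C_K\mcl Q&I}$, which is exactly the block required in Thm.~\ref{thm:Normnoncoercive}\emph{(ii)} once $\mcl Q^*\mcl T^*=\mcl T\mcl Q$ is used, with no self-adjointness of $\mcl Q$ needed.
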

\begin{proof}
The proof follows the exact same steps as proofs of \textit{(ii)} in Thms. \ref{thm:Normcoercive} and \ref{thm:Normnoncoercive}, with the additional change of variable $\mcl K\mcl Q = \mcl Z$. If there exists a $\gamma>0, \mcl Q$ satisfying \textit{(i)} or \textit{(ii)} above, and the solution $\{\mbf x, z\}$ satisfies the closed-loop PIE under the feedback $u(t)=\mcl K\mbf x(t)$, then $\gamma, \mcl Q$ satisfy the conditions in \textit{(ii)} of Thm. \ref{thm:Normcoercive} or \ref{thm:Normnoncoercive}, respectively, for the closed-loop PIE. Thus, the closed-loop PIE Eq. \eqref{eqn:PIEauxwinput} satisfies the norm bound.









\end{proof}

\section{Numerical Results}

In this section, we present numerical examples validating the above theoretical results. First, we present two PDE examples for which I2P norm bounds are computed using the LPIs described in Theorem \ref{thm:Normnoncoercive}. For the state feedback control design, we use the conditions in \textit{(i)} of Cor. \ref{cor:control}. All the steps required to solve the optimization problems in Thms. \ref{thm:Normnoncoercive} and Cor. \ref{cor:control}, are implemented using the PIETOOLS toolbox.

\subsection{Impulse-to-peak Norm Upper-bounding}
\begin{example}

Consider the following transport PDE:
\begin{align*}
  \partial_t x(t,s) &= \partial_sx(t,s)+(s-s^2)w(t),\; z(t) = \int_0^1 x(t,s)\ ds
\end{align*}
where $z$ is the regulated output and Dirichlet boundary condition $x(t,1) = 0$.
For this PDE, we use PIETOOLS to obtain the PIE parameters and solve the optimization problems presented in Thm. \ref{thm:Normnoncoercive} with $\gamma^2$ as the objective to be minimized. 
The upper bounds on I2P norm obtained from the primal and dual LPI solutions is $\gamma = 0.1684$ and $\gamma = 0.1667$ respectively, whereas the theoretical bound is $1/6$.
\end{example}

\begin{example}
Next, we consider the heat equation
\begin{align*}
\partial_t x(t,s) &= \partial^2_sx(t,s)+sw(t),\quad
z(t) = \int_0^1 x(t,s) \ ds,
\end{align*}
with boundary conditions $x(t,0) = \partial_s x(t,1) = 0$. We again solve the LPI problems in Thm. \ref{thm:Normnoncoercive} minimizing $\gamma^2$ and obtain $\gamma = 0.5000$, which matches the theoretical bound $1/2$. 
\end{example}

\subsection{Impulse-to-peak Norm Optimal State Feedback Control}



The first example considers I2P optimal control and stabilization of an unstable reaction-diffusion equation. In the second example, we consider a beam example and design a controller to reduce the peak output norm.

\begin{example}[Unstable Reaction-Diffusion Equation]
Consider an unstable reaction-diffusion equation:
\begin{align*}
    \partial_tx(t,s) &= 14 x(t,s)+ \partial_{s}^2x(t,s) + (s^2-2s)w(t)+u(t),\\
    z(t) &=  \int_0^1 2x(t,s)\ ds,\;\;\; x(t, 0) = \partial_s x(t,1) = 0.
\end{align*}
Since this PDE is unconditionally unstable, the I2P gain is infinity. 
To find an optimal stabilizing control, we solve the LPI presented in $\textit{(i)}$ of Cor. \ref{cor:control} for fixed $\gamma$ and apply bisection method to obtain the best achievable $\gamma$ and the corresponding controller. In this case, we can find a stabilizing controller that achieves $\gamma =  1.375$. The closed-loop system response with the obtained controller is shown in Fig. \ref{fig:reactevol}. Additionally, the regulated output is shown in Fig. \ref{fig:reactreg}.
\end{example}

\begin{figure}
    \centering
    \includegraphics[width=0.99\linewidth]{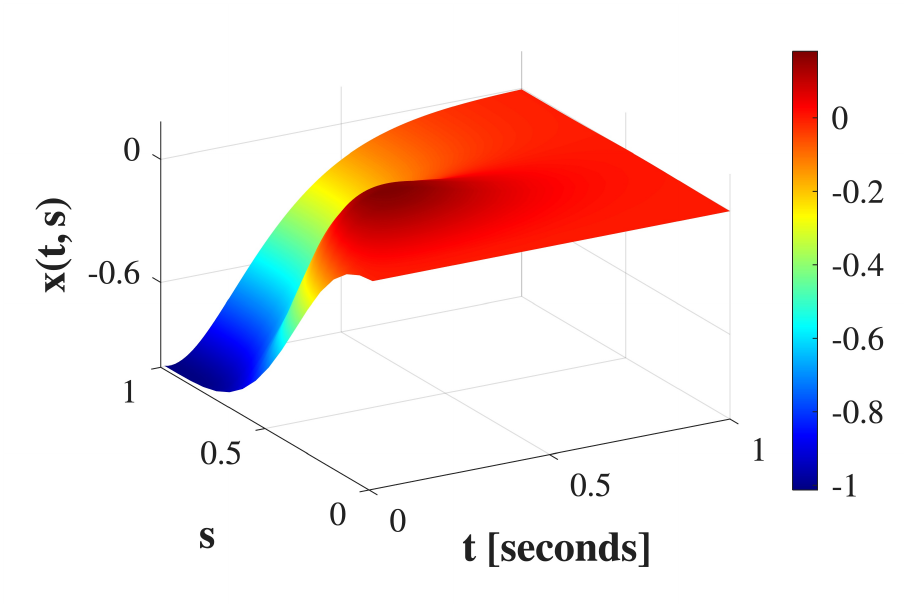}
    \caption{Evolution of the PDE state for the reaction-diffusion equation under state-feedback controller}
    \label{fig:reactevol}
    \vspace{-15pt}
\end{figure}
\begin{figure}
    \centering
    \includegraphics[width=0.75\linewidth]{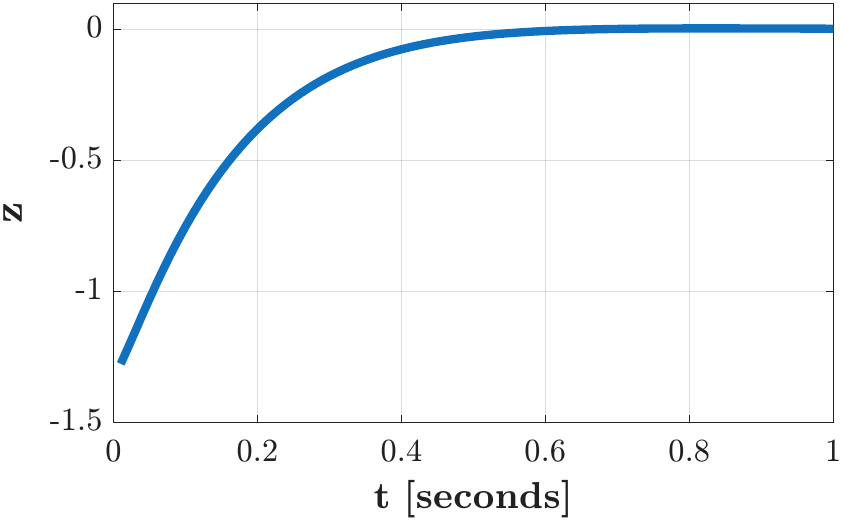}
    \caption{The regulated output, $\int_0^12 x(t,s) ds$, for the reaction-diffusion equation. The control stabilizes the system and satisfies peak output bounds.}
    \label{fig:reactreg}
\end{figure}

\begin{example}[Timoshenko Beam Equations]
Considering a beam with transverse displacement $w$ and cross-section rotation angle $\phi$, the Timoshenko beam equations model the evolution of these quantities with the following equations:
\begin{align}
\ddot{w} = \partial_s(w_s - \phi)+u+sw, \;\; \ddot{\phi} = \phi_{ss} +(w_s - \phi)  \nonumber 
\\ \phi(0) = w(0) = 0, \; \phi_s(0) = 0, \; w_s(1) - \phi(1) = 0. \nonumber
\end{align}

Then, we can choose $\mbf x = \begin{bmatrix}
    \dot{w},  w_s -\phi, \dot{\phi}, \phi_s
\end{bmatrix}$ and get:
\begin{align}
    \dot{\mbf x}(t,s) = \begin{bmatrix}
        0 & 0&0&0\\ 0&0&-1&0\\ 0 &1 &0&0\\ 0&0&0&0
    \end{bmatrix} \mathbf{x}(t,s) + \begin{bmatrix}
        0&1&0&0\\ 1&0&0&0\\ 0&0&0&1\\ 0&0&1&0
    \end{bmatrix} \partial_s \mbf{x}(t,s)\nonumber\\+\begin{bmatrix}
        1&0&0&0
    \end{bmatrix}^\top u + \begin{bmatrix}
        s&0&0&0
    \end{bmatrix}^\top w\nonumber,    
    \\\mathbf{x}_1(t,0) = \mathbf{x}_3(t,0) = \partial_s\mathbf{x}_4(t,0) = \partial_s\mathbf{x}_2(t,0) = 0 \nonumber
\end{align} with output  $z(t) = \int_0^1 \begin{bmatrix}
        1 & 0&0&0
    \end{bmatrix} \mathbf{x}(t,s) ds$. 

The closed-loop I2P-norm is 0.50, while the open-loop I2P-norm is 0.54, proving that controller suppresses peak-output norm.
\end{example}
\begin{figure}
    \centering
    \includegraphics[width=0.75\linewidth]{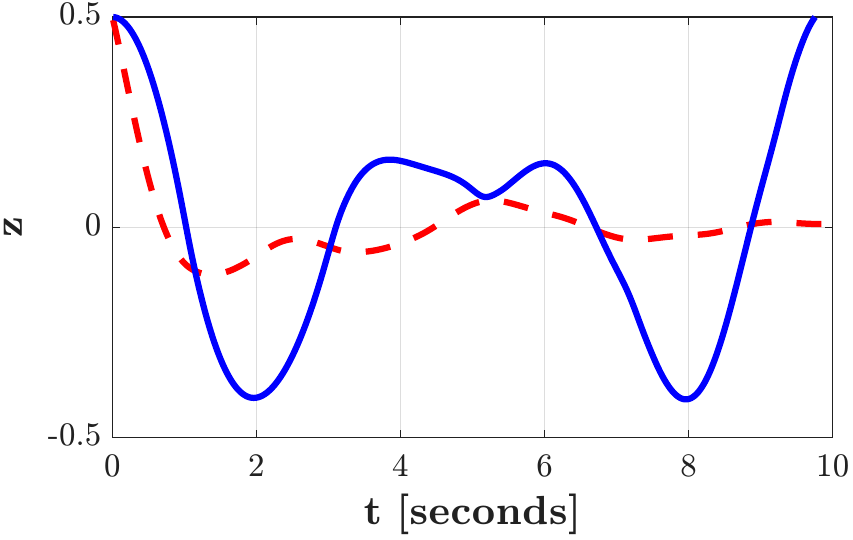}
    \caption{The regulated output, $\int_0^1 \mathbf{x}_1(t,s)ds$, for the controlled ($\textcolor{red}{--}$) and uncontrolled ($\textcolor{blue}{-}$) Timoshenko beam equations.}
    \vspace{-14pt}
    \label{fig:placeholder}
\end{figure}

\begin{figure}
    \centering
    \vspace{-15pt}
    \includegraphics[width=0.99\linewidth]{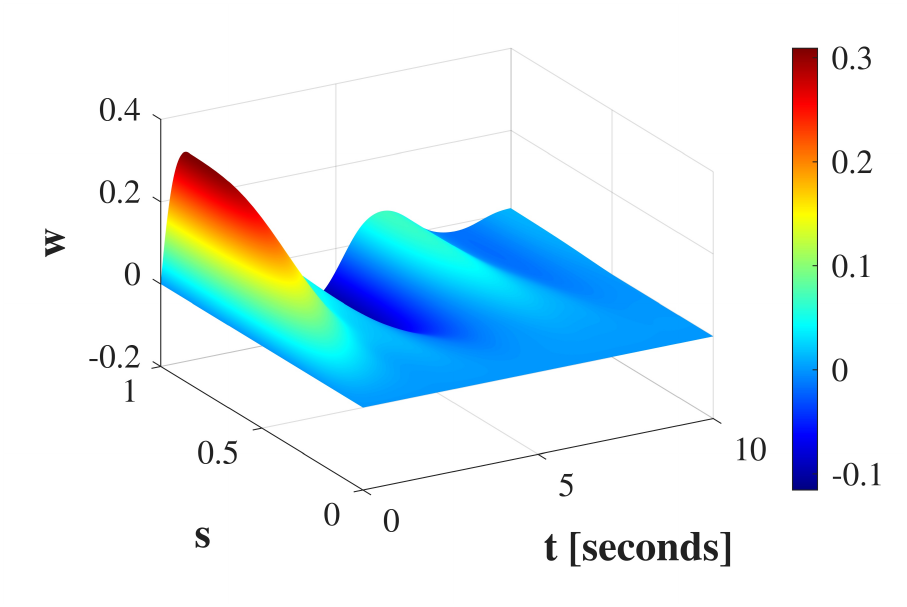}
    \caption{Evolution of ${w}$, for the Timoshenko Beam Equations under state-feedback control.}
    \label{fig:placeholder}
    \vspace{-15pt}
\end{figure}

\subsection{A Note on Computation}

Table \ref{tab:execution_time_comparison} provides the time per bisection iteration for the state-feedback control design problems we considered. It also displays the size of the underlying LMIs.

\begin{table}[h] 
\centering 
\begin{tabular}{lcccc} 
\textbf{\makecell{Model\\Type}} & \textbf{\makecell{Time \\(in CPU sec.)}} & \textbf{\makecell{\# of \\iterations}}& \textbf{\makecell{Decision \\Variables}} & \textbf{Constraints} \\ 
\midrule
Beam & 14.59 & 15 & 203,065 & 4,353 \\ 
\makecell{Reaction-\\diffusion} & 11.89 & 5 & 183,840& 1,054\\
\end{tabular} 
\caption{Computation Parameter and Time Comparison}
\vspace{-20pt}
\label{tab:execution_time_comparison} 
\end{table}

\section{Conclusion}
Despite its relevance in safe operations, a computational method to characterize I2P norm for a sufficiently general class of PDEs was not available. In this work, we leveraged the PIE representation to develop Lyapunov-based convex optimization formulation of the I2P norm bounding problem for arbitrary linear PDEs. We showed the bounds so obtained are provable in the sense the feasibility of LPIs provide a certificate for the I2P-norm bound. We also demonstrated there is no theoretical conservatism in the primal and dual I2P-norms. We solved the impulse-to-peak norm upper-bounding and norm-optimal state-feedback control synthesis problem for linear PDEs and validated the solution for various numerical examples, for which analytical bounds are known.

\bibliographystyle{IEEEtran} 
\bibliography{name}  

@article{TOKUNAGA19981473,
title = {{Analysis and Synthesis of the Robust Impulse-to-Peak Performance}},
journal = {Automatica},
volume = {34},
number = {11},
pages = {1473-1477},
year = {1998},
issn = {0005-1098},
doi = {https://doi.org/10.1016/S0005-1098(98)00096-X},
author = {H. Tokunaga and T. Iwasaki and S. Hara},
}

@article{shivakumar_extension_2024,
    title = {{Extension of the Partial Integral Equation Representation to {GPDE} Input-Output Systems}},
    urldate = {2025-10-20},
    journal = {IEEE Transactions on Automatic Control},
    author = {S. Shivakumar and A. Das and S. Weiland and M. Peet},
    volume={70},
    number={5},
    pages={3240-3255},
    year = {2024},
}

@article{shivakumar_dual_2024,
  title={Dual Representations and ${H}_{\infty}$-Optimal Control of Partial Differential Equations},
  author={Shivakumar, Sachin and Das, Amritam and Peet, Matthew},
  journal={IEEE Transactions on Automatic Control},
  year={2026},
  publisher={IEEE}
}

@article{shivakumar_pietools_2025,
    title = {{PIETOOLS} 2024: {User} {Manual}},
    author = {S. Shivakumar and D. Jagt and D. Braghini and A. Das and Y. Peet and M. Peet},
    year = {2025},
    journal = {arXiv preprint arXiv:2501.17854},
}

@ARTICLE{balas,
  author={Balas, M.},
  journal={IEEE Transactions on Automatic Control}, 
  title={{Feedback Control of Flexible Systems}}, 
  year={1978},
  volume={23},
  number={4},
  pages={673-679},
  doi={10.1109/TAC.1978.1101798}}

@book{Lasiecka_Triggiani_2000, place={Cambridge}, series={Encyclopedia of Mathematics and its Applications}, title={Control Theory for Partial Differential Equations: Continuous and Approximation Theories}, publisher={Cambridge University Press}, author={I. Lasiecka and R. Triggiani}, year={2000}, collection={Encyclopedia of Mathematics and its Applications}}

@article{Moura,
    author = {S. J. Moura and H. K. Fathy},
    title = {{Optimal Boundary Control of Reaction–Diffusion Partial Differential Equations via Weak Variations}},
    journal = {Journal of Dynamic Systems, Measurement, and Control},
    volume = {135},
    number = {3},
    pages = {034501},
    year = {2013},
    issn = {0022-0434},
    doi = {10.1115/1.4023071},
    eprint = {https://asmedigitalcollection.asme.org/dynamicsystems/article-pdf/135/3/034501/6113724/ds_135_3_034501.pdf},
}

@article{vanKeulen,
author = {B. van Keulen},
title = {{Redheffer’s Lemma and $\mathcal{H}_{\infty}$-Control for Infinite-Dimensional Systems}},
journal = {SIAM Journal on Control and Optimization},
volume = {32},
number = {1},
pages = {261-278},
year = {1994},
doi = {10.1137/S036301299121741X}
}

@article{mouraCST,
  author={S. Dey and H. E. Perez and S. J. Moura},
  journal={IEEE Transactions on Control Systems Technology}, 
  title={{Model-Based Battery Thermal Fault Diagnostics: Algorithms, Analysis, and Experiments}}, 
  year={2019},
  volume={27},
  number={2},
  pages={576-587},
  doi={10.1109/TCST.2017.2776218}}

@article{Guay,
author = {H. Shang and J. Forbes and M. Guay},
title = {{Model Predictive Control for Quasilinear Hyperbolic Distributed Parameter Systems}},
journal = {Industrial \& Engineering Chemistry Research},
volume = {43},
number = {9},
pages = {2140-2149},
year = {2004},
doi = {10.1021/ie030653z},
eprint = {https://doi.org/10.1021/ie030653z}}

@book{curtain2012introduction,
  title={An Introduction to Infinite-Dimensional Linear Systems Theory},
  author={R. F. Curtain and H. Zwart},
  volume={21},
  year={2012},
  publisher={Springer Science \& Business Media}
}

@article{PEET2021109473,
title = {{A Partial Integral Equation ({PIE}) Representation of Coupled Linear {PDEs} and Scalable Stability Analysis Using {LMIs}}},
journal = {Automatica},
volume = {125},
pages = {109473},
year = {2021},
issn = {0005-1098},
doi = {https://doi.org/10.1016/j.automatica.2020.109473},
author = {Matthew Peet},}

@book{troltzsch2010optimal,
  title={Optimal Control of Partial Differential Equations: Theory, Methods, and Applications},
  author={F. Tr{\"o}ltzsch},
  volume={112},
  year={2010},
  publisher={American Mathematical Society}
}

@article{mironchenko2020input,
  title={{Input-to-State Stability of Infinite-Dimensional Systems: Recent Results and Open Questions}},
  author={A. Mironchenko and C. Prieur},
  journal={SIAM Review},
  volume={62},
  number={3},
  pages={529--614},
  year={2020},
  publisher={SIAM}
}

\section{Appendix}
\subsection{Additional Examples}\
\begin{example} (More Beam Plots)
Additional plots showing the evolution of actual PDE states for the Timoshenko Beam example are given in Fig. \ref{fig:placeholder1} and \ref{fig:placeholder2}.
\begin{figure}
    \centering
    \includegraphics[width=0.99\linewidth]{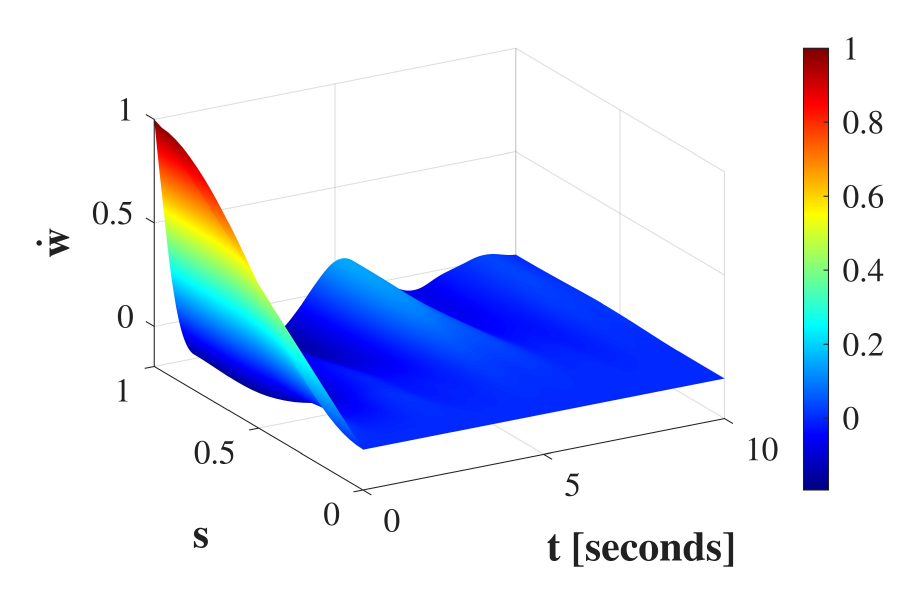}
    \caption{Evolution of the first PDE state, $\dot{w}$, for the Timoshenko Beam Equations under state-feedback control.}
    \label{fig:placeholder1}
\end{figure}

\begin{figure}
    \centering
    \includegraphics[width=0.99\linewidth]{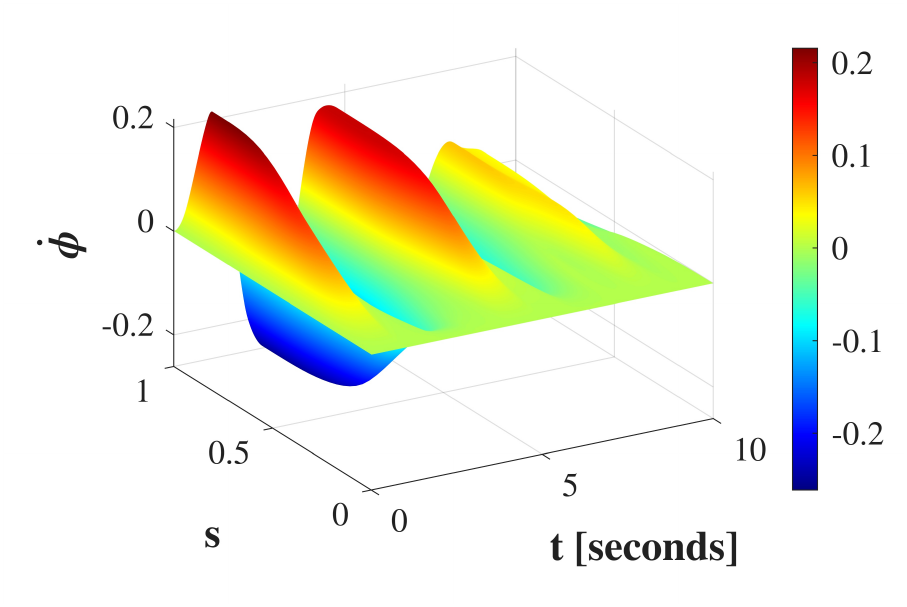}
    \caption{Evolution of the first PDE state, $\dot{\phi}$, for the Timoshenko Beam Equations under state-feedback control.}
    \label{fig:placeholder2}
\end{figure}
\end{example}
\begin{example}[Transport Equation]
Next, we consider a transport equation with control as
\begin{align*}
    \partial_tx(t,s) &= \partial_{s}x(t,s) + 10s(s-1)(s-0.5)w(t) +u(t),\\
    z(t) &=  \int_0^1 x(t,s)\ ds,\quad x(t, 1) = 0.
\end{align*}
Although the transport equation is naturally stable, the control action can be utilized to suppress vibrations. Similar to unstable reaction-diffusion PDE, we search for an optimal controller using LPIs in Cor. \ref{cor:control} by bisecting on the $\gamma$ parameter. As seen in Fig. \ref{fig:transportresponse}, we see the stability is maintained. Moreover, the uncontrolled system has an impulse-to-peak bound of $0.1678$, whereas with control we can achieve an impulse-to-peak norm of $0.1247$ (see Fig. \ref{fig:controlledregulatedoutputstransport}). The transport problem had a mean iteration time in CPU seconds of 3.43 over 15 iterations with a problem size involing 85271 decision variables and 699 constraints. 
\end{example}

\begin{figure}
    \centering
    \includegraphics[width=0.99\linewidth]{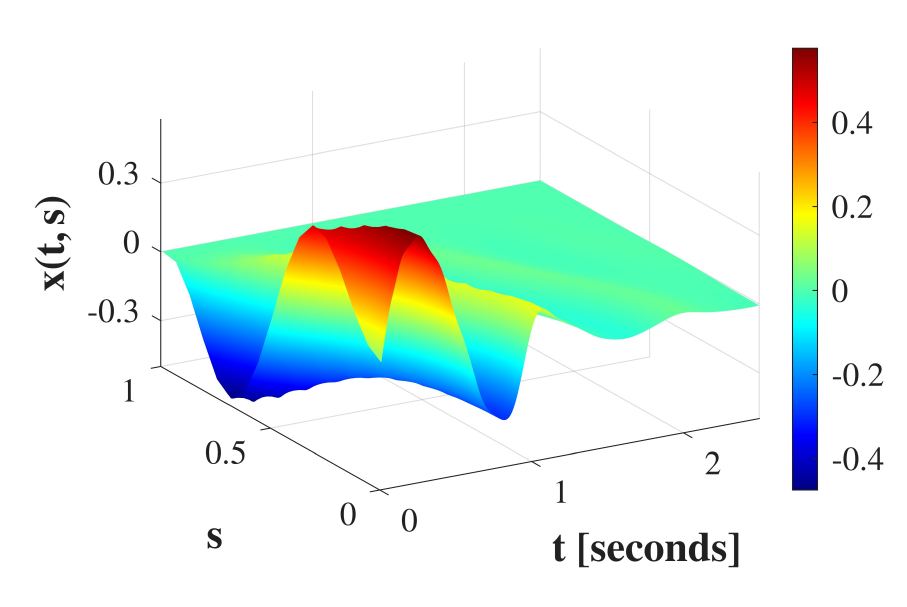}
    \caption{Evolution of the PDE state for the transport equation under state-feedback controller}
    \label{fig:transportresponse}
\end{figure}
\begin{figure}
    \centering
    \includegraphics[width=0.8\linewidth]{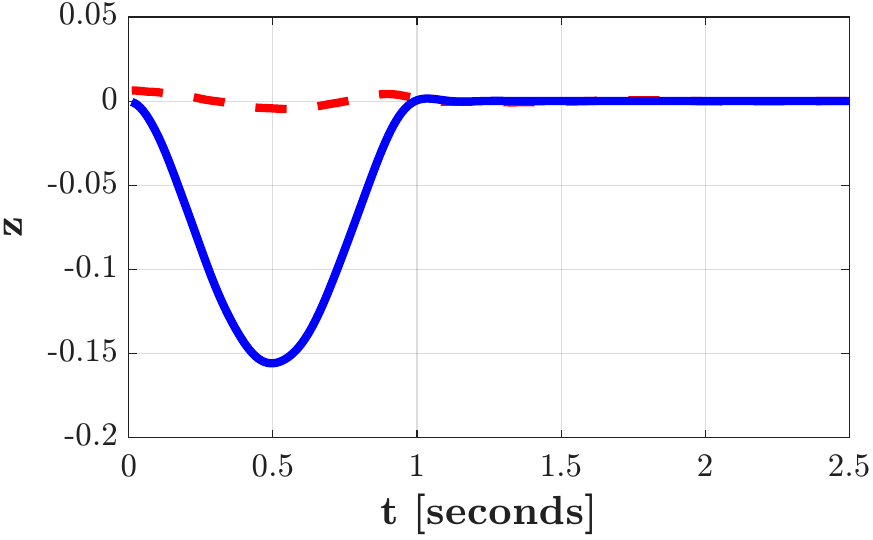}
    \caption{The regulated output, $\int_0^1 x(t,s)ds$, for the controlled ($\textcolor{red}{--}$) and uncontrolled ($\textcolor{blue}{-}$) transport equation. The control reduces the peak response under impulsive input in comparison to uncontrolled system.}
    \label{fig:controlledregulatedoutputstransport}
\end{figure}

\end{document}